\def\be{\begin{equation}}
\def\ee{\end{equation}}
\def\ba*{\begin{eqnarray*}}
\def\ea*{\end{eqnarray*}}
\newcommand{\vG}{\varGamma}
\newcommand{\ve}{\varepsilon}
\newcommand{\vS}{\varSigma}
\newcommand{\N}{\mathbb{N}}
\newcommand{\R}{\mathbb{R}}
\newcommand{\mcL}{{\mathcal L}}
\newcommand{\mcI}{{\mathcal I}}
\newcommand{\mcH}{\mathcal H}
\newcommand{\vPh}{\varPhi}
\def\id{$[0,1]$}
\newtheorem{thm}{Theorem}[section]
\newtheorem{deff}[thm]{Definition}
\newtheorem{lem}[thm]{Lemma}
\newtheorem{rem}[thm]{Remark}
\newtheorem{prop}[thm]{Proposition}
\newtheorem{que}[thm]{Question}
\newtheorem {exa}[thm]{Example}
\begin{document}
\hyphenation{pro-per-ties} \hyphenation{cha-rac-te-ri-za-tions}
\hyphenation{stron-gly} \hyphenation{pro-per-ties}
\hyphenation{Pro-per-ty} \hyphenation{si-mi-lar}
\hyphenation{re-gu-la-ri-ty} \hyphenation{ine-qua-li-ty}
\hyphenation{non-over-lapping} \hyphenation{glo-bal-ly}
\hyphenation{sub-in-ter-vals} \hyphenation{se-quen-ce}
\hyphenation{u-ni-for-mly} \hyphenation{ge-ne-ra-li-zed}
\hyphenation{con-ti-nu-ous} \hyphenation{glo-bal-ly}
\hyphenation{sub-in-ter-vals} \hyphenation{se-quen-ce}
\hyphenation{u-ni-for-mly} \hyphenation{ge-ne-ra-li-zed}
\hyphenation{ter-mi-no-lo-gy} \hyphenation{re-fe-ren-ce}
\hyphenation{Theo-rem} \hyphenation{sub-in-ter-vals}
\hyphenation{dif-fe-ren-tia-ble} \hyphenation{u-ni-for-mly}
\hyphenation{e-qui-va-lent} \hyphenation{no-ti-cing}
\hyphenation{ge-ne-ra-ted}

{\em \small Dedicated to Prof.  Giulianella Coletti with deep esteem and sincere friendship on the occasion of Her 70th birthday\\}
\title{ Multifunctions determined by integrable functions}
\thanks{This research was partially supported by Grant
"Metodi di analisi reale per l'appros\-simazione attraverso operatori discreti e applicazioni”, (2019)  of GNAMPA -- INDAM (Italy), by University of Perugia --  Fondo Ricerca di Base 2018
 and by University of Palermo.}
\author[add1]{D. Candeloro}\email{domenico.candeloro@unipg.it}
\author[add2]{L. Di Piazza} \email{luisa.dipiazza@unipa.it}
\author[add3]{ K. Musia{\l}} \email{musial@math.uni.wroc.pl}
\author[add1]{A. R. Sambucini
 \email{anna.sambucini@unipg.it}}
\address[add1]{Department of Mathematics and Computer Sciences - 06123 Perugia (Italy)}
\address[add2]{Department of Mathematics, University of Palermo,  Via Archirafi 34, 90123 Palermo (Italy)}
\address[add3]{Institut of Mathematics, Wroc{\l}aw University,  Pl. Grunwaldzki  2/4, 50-384 Wroc{\l}aw (Poland)}
\maketitle
\begin{abstract}
 Integral properties of multifunctions determined by  vector valued functions are presented.   
 Such multifunctions  quite often serve as examples and counterexamples.
In particular it can be observed that the properties of being integrable in the sense of Bochner, McShane or Birkhoff can be transferred  to the generated multifunction while Henstock integrability does not guarantee it.\\
\end{abstract}

\noindent
{ \bf keyword}:
Positive multifunction, gauge integral,   selection, multifunction determined by a function, measure theory.
\\

\noindent
\subjclass[MSC 2010]  28B20,  26E25, 26A39,
 28B05,  46G10, 54C60, 54C65


\section*{Introduction}
 The theory of multifunctions  is an important
field of investigations as theoretical applications and it also allows  to take into account the multiplicity of possible choices in a lot of situations ranging
from Optimal Control to Economic Theory.
In recent years, particular attention has been paid to the study of interval-valued multi-functions because they have a vast range of applications that varies from the representation of uncertainty, to interval-probability, to martingales of multivalued functions (see for example \cite{pap,pap-iran} or \cite{gav2014} and references therein).
In particular the use of intervals to represent uncertainty in the area of decision  and information theory
 has been suggested by several authors.
  At the same time, positive interval-valued multifunctions have also played an important role in applications and they arise quite naturally, for example,  in the context of fractal image coding, as shown in \cite{latorre} or in differential inclusions  (see for example \cite{ci1,dms,dms1}).
  
In the recent literature, several  methods of integration for
Banach-space valued  functions and multifunctions have been studied, based on  various possible constructions of the
Lebesgue integral and on the definition of the Kurzweil-Henstock  integral for real valued functions. 
This is due to the fact that even in case of real valued functions, the Lebesgue integral is not the suitable  tool, for example, if we want integrate a derivative: to this aim it needs to use the Kurzweil-Henstock  integral.
Moreover the study of non-additive set functions and set multifunctions has recently
received a special attention, because of its applications in statistics, biology, theory of games and economics. For this purpose the Choquet integral is a powerful tool (see for example \cite{sun}): 
as an example we recall that
in Dempster-Shafer's mathematical theory of evidence
the Belief interval of an event $A$ is the range defined  by the minimum and the maximum
values which could be assigned to $A: [Bel(A); Pl(A)]$ (see also \cite{capot,coletti0,coletti,pva}) where $Bel$ and $Pl$ are the 
Belief and Plausibility functions  that are defined by a basic probability assignment $m$; so to each event $A$ we can assign an interval valued multimeasure.
While,
in the case of vector valued functions, elementary classical examples show that
 the Bochner integral  is highly restrictive; in fact it integrates few functions: for example the function $f: [0,1] \rightarrow l_{\infty}([0,1])$, defined by $f(t)=\chi_{[0,t]}$ is not Bochner  measurable.
By generalizing in some sense the characterization of the Lebesgue primitives for real valued functions we have the Pettis integral for vector valued functions. Instead, a generalization of  the  Lebesgue integral's definition
by using Riemann sums, produces the McShane  and  the Birkhoff integral for vector
valued functions. If the Banach space is separable, then the Pettis,  McShane  and   Birkhoff integrals concide; but for more general Banach spaces they are in general different (see also \cite{DP,r2008}).
In vector spaces it is also possible  to give a version of Choquet integral, also  combining it  with Pettis integral (cf. \cite{sun,CMeS,ar,park}), this is a new line of research which seems very interesting but which needs further study, also in light of the results contained in \cite{am}.
 
 For this reason in the present paper we consider different integrals for multifunctions $G$  determined by  vector valued functions.
What we want to do in this work is to extend to a Banach space $X$  the interval valued multifunction that normally has values in the compact and convex subsets of $\mathbb{R}$
 (namely $F: [0,1] \to ck(\mathbb{R})$) so that it is also positive: that is we introduce the multifunctions $G$ determined by a vector function $g$:
$G(t)={\rm conv}\{0,g(t)\}$  and we study the properties that are inherited from $g$ from the point of view of the  integrability. 

A  study of such kind of multifunctions  was started  in \cite{monat2019} where their properties  were examined with respect to ``scalarly defined integral'' as Pettis, Henstock-Kurzweil-Pettis, Denjoy-Pettis integrals. In the present article we want to examine  the properties that are inherited from the
 "gauge integrals": Henstock, McShane, Birkhoff and variational integrals. We remember that
the construction of the gauge integrals is very similar to that of the Riemann one, but with one crucial difference: instead of using a mesh  to measure the fineness of a tagged partition,  a gauge is considered, which need not be uniform in the integration domain, see for example \cite{bcs2015,bms, BS2011,ccgs,ncm, cs2015,dp,dp-ma,ckr1,kal}.

The paper is organized as follows: in Section \ref{due} the basic concepts and terminology are introduced in order to define the various type of integrability that are studied.
 In Section \ref{tre} we  study properties of multifunctions $G$ generated by  functions $g$ integrable with respect to gauge integrals. The main results of this section are that a determined multifunction $G$ is Bochner, McShane or  Birkhoff integrable
if and only if $g$ is integrable in the same way  (Proposition  \ref{T4} and Theorem \ref{glim}).
Henstock integrability of $g$ does not guarantee Henstock integrability of $G$ and we do not know whether variational Henstock integrability of $g$ yields the same for $G$.  Only a partial result is obtained (Proposition \ref{partial}); the general case remains an open question.
At the end of this section examples are given in order to show that the results contained in
\cite[Theorem 4.2]{monat2019}
are not ensured if the multifunctions are not necessarily Henstock or $\mathcal{H}$ integrable.
 In our future works we shall investigate the relationships among the set valued
 integrals introduced and other set-valued integrals like that of Choquet and
Sugeno, in order to  have other applications of  the  obtained results.
\section{Definitions, terminology}\label{due}
Throughout the paper   $X$ denotes a Banach space with its dual $X^*$, while $B_X$ is its
 closed unit ball.
 The symbols
$ck(X), (cwk(X))$  denote
the families of all
non empty, convex and compact (weakly compact) subsets  of $X$.  For every $C \subset X$ the $s( \cdot, C)$ denotes the
{\it support function of the set}\,  $C$  and is
defined on $X^*$ by $s(x^*, C) := \sup \{ \langle x^*,x \rangle \colon  \ x
\in C\}$, for each $x^* \in X^*$. $|C|:=\sup\{\|x\|: x\in{C}\}$  and $d_H$ is the Hausdorff metric on the hyperspace $ck(X)$.
 The symbol $\|\cdot\|_{\infty}$ denotes the  sup norm as usual.\,  All functions investigated  are defined on the interval $[0,1]$
 endowed with Lebesgue measure $\lambda$.  $\mcI$ is the collection of all closed subintervals  $I$ of the  interval $[0,1]$, and  with the symbol $|I|$ we mean its $\lambda$-measure.
\\
$G\colon [0,1] \to 2^X \setminus \{\emptyset \}$ is a {\it positive} multifunction if $s(x^*,G)\geq 0$ a.e. for each $x^*\in{X^*}$ separately.
$G:[0,1]\to{ck(X)}$ is {\it  determined by a function} $g:[0,1]\to{X}$ if $G(t)={\rm conv}\{0,g(t)\}$ for every $t\in[0,1]$; obviously  determined multifunctions $G$ are positive.\\
 A function $f:[0,1]\to X$ is called a {\it selection of} $G$ if $f(t) \in G(t)$,   for every $t\in  [0,1]$.\\
A multifunction $G\colon [0,1]\to 2^X \setminus \{\emptyset \}$ is  {\it simple} if it is measurable and has only a finite number of values.
$G\colon[0,1]\to c(X)$ is  {\it scalarly measurable} if for every $ x{}^* \in  X{}^*$, the map $s(x{}^*,G(\cdot))$ is measurable.
$G \colon [0,1]\to{ck(X)}$ is said to be {\it Bochner measurable}  if there exists a sequence of simple multifunctions $G_n \colon [0,1] \to ck(X)$ such that
for almost all $t \in [0,1]$ it is
$\lim_{n\rightarrow \infty}d_H(G_n(t),G(t))=0$.\\
A map $M\colon   \vS \to ck(X)$ is  {\it additive}, if $M(A\cup{B})=M(A) + {M(B)}$ for every  $A, B$  in the $\sigma$-algebra $  \vS$ such that $A \cap B = \emptyset$.  $M$  is called a {\it  multimeasure}  if
$s(x^*,M(\cdot))$ is a finite measure, for every $x^*\in{X^*}$; such $M$ is also
 {\it countably additive} in
the Hausdorff metric
 (in this case the name {\it h-multimeasure} is used).
\\
We consider here gauge $ck(X)$-valued integrals
 (Birkhoff, McShane, Henstock, $\mcH$ and variationally Henstock).

  \begin{deff} \rm
    A multifunction $G:[0,1]\to ck(X)$ is said to be {\it Birkhoff integrable} on $[0,1]$,
   if there exists a set
$\vPh{}_{G}([0,1]) \in ck(X)$
 with the following property: for every $\varepsilon > 0$  there is a countable
partition $P_0$  of $[0,1]$  in $\Sigma$ such that for every countable partition $P = (A{}_n){}_n$
 of $[0,1]$  in $\Sigma$
finer than $P_0$ and any choice
 $T = \{t_n: t_n \in  A_n\,,n\in\N\}$, the series 
$\sum_n\lambda(A{}_n) G(t{}_n)$
 is unconditionally
convergent (in the sense of the Hausdorff metric) and
\begin{eqnarray}\label{e14-a}
d{}_H \biggl(\vPh{}_{G}([0,1]),\sum_n G(t{}_n) \lambda(A{}_n)\biggr)<\ve\,.
\end{eqnarray}
\end{deff}

We recall  that  a {\it partition} ${\mathcal P}$ {\it in } $[0,1]$ is a collection of pairs $\{(I{}_1,t{}_1),$ $ \dots,(I{}_p,t{}_p) \}$,
where $I{}_1,\dots,I{}_p$ are nonoverlapping subintervals of $[0,1]$, and $t{}_i$ is a point of $[0,1]$, $i=1,\dots, p$.
If $\cup^p_{i=1} I{}_i=[0,1]$, then  ${\mathcal P}$ is called {\it a partition of} $[0,1]$. If   $t_i \in I{}_i$, $i=1,\dots,p$,   we say that  ${\mathcal P}$ is  a {\it Perron partition}.

 A  {\it gauge} on $[0,1]$ is a positive function on $[0,1]$. Given a gauge $\delta$ on $[0,1]$,
we say that the  partition ${\mathcal P}$ is $\delta$-{\it fine} if
$I{}_i\subset(t{}_i-\delta(t{}_i),t{}_i+\delta(t{}_i))$, $i=1,\dots,p$.\\

    \begin{deff} \rm
 A multifunction $G:[0,1]\to ck(X)$ is said to be {\it Henstock} (resp. {\it McShane})
  {\it  integrable} on $[0,1]$,  if there exists   $\vPh{}_{G}([0,1]) \in ck(X)$
    with the property that for every $\varepsilon > 0$ there exists a gauge $\delta: [0,1] \to \mathbb{R}^+$
such that for each $\delta$-fine  Perron partition $\{(I{}_1,t{}_1), \dots,(I{}_p,t{}_p)\}$
(resp. partition)  of $[0,1]$ it is
\begin{eqnarray}\label{e14}
d{}_H \biggl(\vPh{}_{G}([0,1]),\sum_{i=1}^p G(t{}_i)|I{}_i|\biggr)<\ve\,.
\end{eqnarray}
If only measurable gauges are taken into account, then we have the definition of $\mcH$ (resp. Birkhoff) integrability. In fact  as showed in \cite[Remark 1]{nara} the Birkhoff integrability also can be seen as a gauge integrability (this has been further highlighted in \cite{ncm}). 
\end{deff}
\begin{deff}\rm
   A multifunction $G:[0,1]\to ck(X)$ is said to be {\it variationally Henstock}
   integrable,
   if there exists
   a multimeasure  $\vPh_{G}: {\mathcal I} \to {ck(X)}$
   such that:
   for every $\ve>0$ there exists a gauge $\delta$
   on $[0,1]$ such that for each $\delta$-fine Perron partition
   $\{(I_1,t_1), \dots,(I_p,t_p)\}$ in  $[0,1]$
it is
\begin{eqnarray}\label{aa}
\sum_{j=1}^pd_H \left(\vPh_{G}(I_j),G(t_j))|I_j|\right)<\ve\,.
\end{eqnarray}
 The set  multifunction  $\vPh_{G}$  is the {\it variational Henstock}  {\it primitive} of $G$.
\end{deff}
Finally  ${\mathcal{S}}_H(G)\;[{\mathcal{S}}_{MS}(G)\,,\,
   \mathcal{S}_P(G)\,,\,  {\mathcal{S}}_B(G)\,,\,  {\mathcal{S}}_{vH}(G)\,,\;...]$ denotes the family of all scalarly measurable selections of $G$
    that are Henstock [McShane, Pettis,  Birkhoff, variationally Henstock, ...] integrable.
Definitions and  properties
unexplained in this paper can be found in \cite{CV,cdpms2016a,dp,mu15}.
\\
We recall also that the
 Ra{\aa}dstr\"{o}m embedding $i:ck(X)\to l_{\infty}(B_{X^*})$ defined   by $i(A):=s(\cdot, A)$ is a
 useful tool to approach the $ck(X)$-valued multifunctions
  (see, for example, 
\cite[Theorem II-19]{CV}) or \cite[Theorem 5.7]{labu}). This embedding  $i$  fulfils the following properties:
\begin{itemize}
\item[$i_1$)] $i(\alpha A \,  + \,  \beta C) = \alpha i(A) + \beta i(C)$ for every $A,C\in  ck(X),\alpha, \beta \in  \mathbb{R}{}^+$;  (the symbol $+$ is the Minkowski addition)
\item[$i_2$)] $d_H(A,C)=\|i(A)-i(C)\|_{\infty},\quad A,C\in  ck(X)$;
\item[$i_3$)] $i(ck(X))$  is a normed closed cone in the space
$l_{\infty}(B_{X^*})$;
\item[$i_4$)] $i(\overline{co}(A \cup B) )= \max \{i(A), i(C)\}$ for all $A,C \in ck(X)$.
\end{itemize}

\section{Multifunctions determined by functions.}\label{tre}	
Now we are going to consider a particular family
among positive multifunctions: those
 that are determined by  integrable functions. Transferring properties from $g$ to $G$ is more complicated than for scalarly defined integrals studied in \cite{monat2019}.
We begin with the following  fact that needs only a simple calculation:
\begin{lem}\label{L2}
If $g:[0,1]\to X$ and $G:={\rm conv}\{0,g(t)\}$,  then
$d_H(G(t),G(t'))\leq \|g(t)-g(t')\|,$
for all $t,t'\in [0,1].$
\end{lem}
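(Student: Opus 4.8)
The plan is to reduce the Hausdorff distance between the two segments $G(t)=\mathrm{conv}\{0,g(t)\}$ and $G(t')=\mathrm{conv}\{0,g(t')\}$ to a computation with support functions, exploiting the Rådström embedding $i$ and its isometry property $i_2$. Since $i$ satisfies $d_H(A,C)=\|i(A)-i(C)\|_\infty$ with $i(A)=s(\cdot,A)$, it suffices to estimate, uniformly over $x^*\in B_{X^*}$, the quantity $|s(x^*,G(t))-s(x^*,G(t'))|$.

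The key step is to compute the support function of a segment of the form $\mathrm{conv}\{0,x\}$ explicitly. For any $x^*\in X^*$ and any $x\in X$, the supremum of $\langle x^*,y\rangle$ over $y$ in the segment from $0$ to $x$ is attained at an endpoint, so
\begin{equation}
s(x^*,\mathrm{conv}\{0,x\})=\max\{0,\langle x^*,x\rangle\}=\langle x^*,x\rangle^+.
\end{equation}
Thus $s(x^*,G(t))-s(x^*,G(t'))=\langle x^*,g(t)\rangle^+-\langle x^*,g(t')\rangle^+$. Now I would use the elementary fact that the positive-part map $r\mapsto r^+$ is $1$-Lipschitz on $\R$, which gives
\begin{equation}
\bigl|\langle x^*,g(t)\rangle^+-\langle x^*,g(t')\rangle^+\bigr|\le\bigl|\langle x^*,g(t)-g(t')\rangle\bigr|\le\|x^*\|\,\|g(t)-g(t')\|.
\end{equation}

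Taking the supremum over $x^*\in B_{X^*}$, where $\|x^*\|\le 1$, and invoking the isometry $i_2$ yields
\begin{equation}
d_H(G(t),G(t'))=\sup_{x^*\in B_{X^*}}\bigl|s(x^*,G(t))-s(x^*,G(t'))\bigr|\le\|g(t)-g(t')\|,
\end{equation}
which is exactly the claimed inequality. I do not expect any serious obstacle here: the only point requiring minor care is verifying that the Hausdorff distance is genuinely represented as the sup over the unit ball of the support-function differences (this is the content of $i_2$ together with the definition of the $\|\cdot\|_\infty$ norm on $l_\infty(B_{X^*})$), and that the supremum in the support function of a segment is attained at an endpoint. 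Both are routine, so this lemma is essentially a one-line consequence of the positive-part map being nonexpansive.
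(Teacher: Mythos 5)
Your proof is correct and is essentially the argument the paper intends: the lemma is stated there without proof (``needs only a simple calculation''), and that calculation is exactly yours, since the paper itself relies on the identity $s(x^*,G(t))=\langle x^*,g(t)\rangle^+$ (see the proof of Proposition~\ref{p6}) and on property $i_2$) of the R{\aa}dstr\"{o}m embedding, i.e.\ $d_H(A,C)=\sup_{x^*\in B_{X^*}}|s(x^*,A)-s(x^*,C)|$. Your reduction to the $1$-Lipschitz positive-part map closes the estimate with no gaps.
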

\begin{prop}\label{T4}
If $G$ is determined by a strongly measurable $g$, then it is Bochner integrable (that is $G$ is Bochner measurable and  integrably bounded) if and only if $g$ is Bochner integrable.
\end{prop}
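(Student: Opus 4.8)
The plan is to reduce the multivalued statement to the classical scalar characterization of Bochner integrability, namely that a strongly measurable function $g$ is Bochner integrable precisely when $\int_0^1\|g(t)\|\,d\lambda<\infty$. Since $g$ is assumed strongly measurable throughout, the only two things I need to verify are that the integrable boundedness of $G$ matches the integrability of $\|g\|$, and that $G$ is automatically Bochner measurable.

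First I would record the pointwise identity $|G(t)|=\|g(t)\|$ for every $t\in[0,1]$. This is immediate from $G(t)={\rm conv}\{0,g(t)\}$: the convex hull of $\{0,g(t)\}$ is the segment joining $0$ to $g(t)$, and every point $\theta g(t)$ with $\theta\in[0,1]$ has norm at most $\|g(t)\|$, with equality at $\theta=1$. Consequently $\int_0^1|G(t)|\,d\lambda=\int_0^1\|g(t)\|\,d\lambda$, so $G$ is integrably bounded if and only if $\|g\|$ is Lebesgue integrable.

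Next I would produce the Bochner measurability of $G$ from the strong measurability of $g$ by transporting an approximating sequence through the construction $v\mapsto{\rm conv}\{0,v\}$. Choose simple functions $g_n\colon[0,1]\to X$ with $g_n(t)\to g(t)$ for almost every $t$, and set $G_n(t):={\rm conv}\{0,g_n(t)\}$. Each $G_n$ takes only finitely many values (one for each value of $g_n$) and is measurable, hence is a simple multifunction. Lemma \ref{L2} then gives $d_H(G_n(t),G(t))\le\|g_n(t)-g(t)\|\to 0$ almost everywhere, so $G$ is Bochner measurable.

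Combining the two observations closes both implications at once. If $G$ is Bochner integrable, then it is integrably bounded, so $\int_0^1\|g(t)\|\,d\lambda=\int_0^1|G(t)|\,d\lambda<\infty$, and since $g$ is strongly measurable it is Bochner integrable. Conversely, if $g$ is Bochner integrable then $\int_0^1\|g(t)\|\,d\lambda<\infty$ forces $G$ to be integrably bounded, while the previous paragraph already supplies the Bochner measurability of $G$; hence $G$ is Bochner integrable. I do not expect a genuine obstacle here: the only point requiring a small check is that the $G_n$ are bona fide simple multifunctions, and the whole difficulty of the Bochner case is absorbed by the Lipschitz-type estimate of Lemma \ref{L2}, which is exactly why this direction is straightforward compared with the Henstock case discussed later.
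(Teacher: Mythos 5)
Your proof is correct, and the half dealing with integrable boundedness coincides with the paper's: both rest on the pointwise identity $|G(t)|=\|g(t)\|$, which reduces the whole question to the scalar integrability of $\|g\|$, given that $g$ is strongly measurable by hypothesis. Where you genuinely diverge is the measurability step. The paper passes through the R{\aa}dstr\"om embedding: it observes that $i\circ G$ inherits the Lusin property from $g$ (via the Lipschitz estimate of Lemma \ref{L2}) and is therefore strongly measurable as an $l_{\infty}(B_{X^*})$-valued function, which by properties $i_1$)--$i_3$) amounts to Bochner measurability of $G$. You instead build the approximating simple multifunctions directly: taking simple $g_n\to g$ a.e.\ and setting $G_n(t)={\rm conv}\{0,g_n(t)\}$, each $G_n$ is a bona fide simple multifunction and $d_H(G_n(t),G(t))\le\|g_n(t)-g(t)\|\to 0$ a.e. Your route is more elementary and matches the paper's definition of Bochner measurability verbatim, sparing the reader the (true, but not completely free in the nonseparable target $l_{\infty}(B_{X^*})$) step that the Lusin property implies strong measurability; the paper's route is shorter and keeps $G$ inside the embedded framework used throughout the rest of the article, where strong measurability of $i\circ G$ is itself useful later. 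One small formal point: Lemma \ref{L2} as stated compares $G(t)$ and $G(t')$ for the \emph{same} determining function, whereas you apply the two-vector inequality $d_H({\rm conv}\{0,u\},{\rm conv}\{0,v\})\le\|u-v\|$ with $u=g_n(t)$ and $v=g(t)$; the lemma's one-line computation gives exactly this more general statement, so the application is sound, but it merits an explicit remark.
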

\begin{proof} It is easy to see that $i\circ{G}$ satisfies the Lusin property, and therefore it is strongly measurable. Moreover, we have  $\|i(G(t))\|=|G(t)|=\|g(t)\|$ for all $t$. So,  $g$ is Bochner integrable if and only if the mapping $t\mapsto|G(t)|$ is integrable, i.e. $G$ is Bochner integrable if and only if $g$ is Bochner integrable.
\end{proof}
Moreover in \cite[Proposition 3.8]{monat2019} it has been proven:

\begin{prop}\label{p12}
If $G$ is determined by a scalarly  measurable $g$, then it is Pettis integrable in $cwk(X)$ if and only if $g$ is Pettis integrable.
\end{prop}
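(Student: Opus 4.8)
The starting point is the elementary identity $s(x^{*},G(t))=\sup\{\langle x^{*},y\rangle:y\in\mathrm{conv}\{0,g(t)\}\}=(\langle x^{*},g(t)\rangle)^{+}$, valid for every $x^{*}\in X^{*}$ and every $t\in[0,1]$. Applying it to $x^{*}$ and to $-x^{*}$ and subtracting gives $\langle x^{*},g\rangle=s(x^{*},G)-s(-x^{*},G)$, so that scalar measurability (resp.\ integrability) of $g$ is equivalent to scalar measurability (resp.\ integrability) of $G$; in particular $g$ is a scalarly measurable selection of $G$ whenever either side is measurable. The plan is then to prove the two implications separately, in each case reducing the multivalued statement to the behaviour of the $L^{1}$ functions $(\langle x^{*},g\rangle)^{+}$ and producing the set $\vPh_{G}(A)\in cwk(X)$ explicitly.

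Assume first that $g$ is Pettis integrable and fix a measurable set $A$. Since every selection of $G$ has the form $\theta g$ with $\theta\colon[0,1]\to[0,1]$, it is natural to introduce the linear map $T\theta:=\int_{A}\theta g\,d\lambda$ (Pettis integral), defined on the order interval $U=\{\theta\in L^{\infty}[0,1]:0\le\theta\le1\}$; here $\theta g$ is Pettis integrable because the product of a Pettis integrable function with a bounded measurable scalar function is again Pettis integrable. For each $x^{*}$ one has $\langle x^{*},T\theta\rangle=\int_{A}\theta\,\langle x^{*},g\rangle\,d\lambda$, and since $\langle x^{*},g\rangle\in L^{1}$ this is $w^{*}$-continuous in $\theta$, so $T$ is $w^{*}$-to-weak continuous. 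As $U$ is convex and $w^{*}$-compact (Banach--Alaoglu), the set $\vPh_{G}(A):=T(U)$ is a nonempty convex weakly compact subset of $X$, i.e.\ belongs to $cwk(X)$. Maximising $\int_{A}\theta\langle x^{*},g\rangle\,d\lambda$ over $U$ by taking $\theta=\chi_{\{\langle x^{*},g\rangle>0\}}$ yields $s(x^{*},\vPh_{G}(A))=\int_{A}(\langle x^{*},g\rangle)^{+}\,d\lambda=\int_{A}s(x^{*},G)\,d\lambda$, which is precisely Pettis integrability of $G$ in $cwk(X)$.

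For the converse, suppose $G$ is Pettis integrable, so for every $A$ there is $\vPh_{G}(A)\in cwk(X)$ with $s(x^{*},\vPh_{G}(A))=\int_{A}(\langle x^{*},g\rangle)^{+}\,d\lambda$; by the first paragraph $g$ is then scalarly integrable. Fixing $A$, the functional $p_{A}(x^{*}):=\int_{A}\langle x^{*},g\rangle\,d\lambda$ is linear in $x^{*}$, bounded (by $|\vPh_{G}(A)|\,\|x^{*}\|$), and satisfies $p_{A}(x^{*})\le\int_{A}(\langle x^{*},g\rangle)^{+}\,d\lambda=s(x^{*},\vPh_{G}(A))$ for all $x^{*}$. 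Hence $p_{A}\in X^{**}$ lies in every half-space $\{z:\langle x^{*},z\rangle\le s(x^{*},\vPh_{G}(A))\}$; since $\vPh_{G}(A)$ is weakly compact, it is $w^{*}$-closed and convex in $X^{**}$ and therefore equals the intersection of these half-spaces, so $p_{A}\in\vPh_{G}(A)\subset X$. Writing $p_{A}=\langle\cdot,v_{A}\rangle$ with $v_{A}\in X$ gives $\langle x^{*},v_{A}\rangle=\int_{A}\langle x^{*},g\rangle\,d\lambda$ for every $x^{*}$, i.e.\ $g$ is Pettis integrable with $\int_{A}g=v_{A}$.

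I expect the delicate points to be the two compactness/closedness arguments rather than any computation. In the first implication the heart of the matter is the weak compactness of $\vPh_{G}(A)$, obtained as the continuous image of the $w^{*}$-compact interval $U$; this is exactly where the hypothesis that $g$ is genuinely Pettis integrable (not merely scalarly integrable) enters, through the existence of $T\theta$ in $X$. In the second implication the crucial step is the passage from $p_{A}\le s(\cdot,\vPh_{G}(A))$ to the membership $p_{A}\in\vPh_{G}(A)$, which rests on $\vPh_{G}(A)$ being weakly compact, equivalently $w^{*}$-closed in $X^{**}$; for a merely closed bounded convex set this step would fail, which explains why $cwk(X)$, rather than $ck(X)$, is the natural target space here.
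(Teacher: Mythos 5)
Your proof is correct, and there is a point of comparison worth noting: the paper does not actually prove Proposition \ref{p12} --- it quotes it from \cite[Proposition 3.8]{monat2019} --- so your argument is a self-contained substitute rather than a variant of an in-paper proof. Your starting identity $s(x^*,G(t))=\langle x^*,g(t)\rangle^+$ is exactly the one the paper exploits elsewhere (in the proofs of Proposition \ref{p6} and Theorem \ref{glim}). In the forward direction, your set $T(U)$ is precisely the set $IS_G=\{(P)\int_0^1 s(t)g(t)\,dt: s\in\mathcal M\}$ that the authors use in the proof of Theorem \ref{glim}; there they establish its weak compactness sequentially (take $s_n\to s$ weakly in $L_1[0,1]$ and pass to the limit via dominated convergence), whereas you get it in one stroke from Banach--Alaoglu and the $w^*$-to-weak continuity of $T$ --- equivalent arguments, yours avoiding the Eberlein--\v{S}mulian reduction; likewise your multiplier fact (that $\theta g$ is Pettis integrable for $\theta\in L^\infty$) rests on the same estimate $\sup_{\|x^*\|\le 1}\int_0^1|\langle x^*,g\rangle|\,d\lambda=K<\infty$ that the paper uses to prove (\ref{giocaL}). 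In the converse direction you have, in effect, reproved the selection theorem that the paper states as Proposition \ref{T2} (\cite[Corollary 1.5]{mu15}): since $g$ is a scalarly measurable selection of $G$, its Pettis integrability follows at once from that result, so your half-space argument in $X^{**}$ could be replaced by a one-line citation --- though as written it is correct, and your closing observation that the membership step $p_A\in\vPh_G(A)$ is exactly where weak compactness (rather than mere closed boundedness) enters correctly identifies why $cwk(X)$ is the natural codomain.
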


For the intermediate integrals between Bochner and Pettis we have first:
\begin{lem}\label{L1}
If $g:[0,1] \to{X}$ is McShane (Birkhoff) integrable and $\alpha:[0,1] \to\R$ is a bounded measurable function, then $\alpha g$ is respectively McShane (Birkhoff) integrable.
\end{lem}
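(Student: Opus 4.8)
The plan is to reduce the statement to the case of a simple multiplier and then pass to a uniform limit, handling the McShane and Birkhoff cases in parallel. First I would recall two standard features of both integrals: linearity, and the restriction property that an integrable $g$ remains integrable (of the same type) on every measurable set, so that $g\chi_E$ is McShane (Birkhoff) integrable for each $E\in\vS$. Consequently, for a simple multiplier $\alpha=\sum_{i=1}^k c_i\chi_{E_i}$ one gets at once that $\alpha g=\sum_{i=1}^k c_i\, g\chi_{E_i}$ is McShane (Birkhoff) integrable. Throughout I would write $\nu_g(E):=\int_E g$ for the indefinite integral of $g$; since $g$ is McShane (Birkhoff) integrable, $\nu_g$ is a countably additive $X$-valued measure, absolutely continuous with respect to $\lambda$, and, being countably additive, of finite semivariation $V:=\|\nu_g\|([0,1])<\infty$.

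Next, since $\alpha$ is bounded and measurable, I would choose simple functions $\alpha_n$ with $\|\alpha_n-\alpha\|_\infty\to0$. By the previous step each $\alpha_n g$ is integrable, with primitive $\vPh_n(E):=\int_E\alpha_n g$. Writing $\alpha_n-\alpha_m$ as a simple function supported on a finite measurable partition $\{E_j\}$ with coefficients $c_j$ satisfying $|c_j|\le\|\alpha_n-\alpha_m\|_\infty$, linearity gives $\vPh_n(E)-\vPh_m(E)=\sum_j c_j\,\nu_g(E\cap E_j)$, and the definition of semivariation yields the key estimate $\sup_{E\in\vS}\|\vPh_n(E)-\vPh_m(E)\|\le\|\alpha_n-\alpha_m\|_\infty\,V$. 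Hence $(\vPh_n)_n$ is uniformly Cauchy on $\vS$ and converges uniformly to a countably additive measure $\vPh$, absolutely continuous with respect to $\lambda$.

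Finally, since $\alpha_n g\to\alpha g$ pointwise in norm (because $\|\alpha_n(t)g(t)-\alpha(t)g(t)\|\le\|\alpha_n-\alpha\|_\infty\|g(t)\|\to0$) and the primitives converge uniformly, a uniform convergence theorem for the McShane (resp. Birkhoff) integral lets me conclude that $\alpha g$ is McShane (resp. Birkhoff) integrable with indefinite integral $\vPh$. I expect this last passage to be the main obstacle. Rather than invoking such a theorem as a black box, one can verify the Cauchy criterion for $\alpha g$ directly: for a prescribed $\ve$ pick $n$ with $\|\alpha_n-\alpha\|_\infty V<\ve$, take a gauge adapted to the integrable function $\alpha_n g$, and control the discrepancy $\sum_i(\alpha-\alpha_n)(t_i)g(t_i)|I_i|$ of the two Riemann sums. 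The delicate point, and precisely where the McShane/Birkhoff nature is used, as opposed to mere Henstock integrability, is that this discrepancy must be bounded by $\|\alpha_n-\alpha\|_\infty V$ uniformly over all fine partitions; this rests on the uniform integrability of the scalar family $\{\langle x^*,g\rangle:\|x^*\|\le1\}$ that McShane (Birkhoff) integrability provides, after which a routine $3\ve$ argument closes the proof.
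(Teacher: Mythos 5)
Your proof is correct, and its opening reduction coincides with the paper's: both arguments approximate $\alpha$ uniformly by simple functions $\alpha_n$ and note that each $\alpha_n g$ is McShane (Birkhoff) integrable by linearity and restriction to measurable sets. Where you part ways is in the passage to the limit. The paper closes qualitatively: from $\|\alpha_n(t)g(t)-\alpha(t)g(t)\|\to 0$ pointwise and the scalar Dominated Convergence Theorem it obtains weak setwise convergence of the primitives $\nu_n(E)\to\nu_{\alpha g}(E)$, and then invokes two external Vitali-type convergence theorems, \cite[Theorem 2I]{FM} for the McShane case and \cite[Theorem 4]{BP} for the Birkhoff case. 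You instead make the convergence quantitative and self-contained: the semivariation estimate $\sup_{E}\|\vPh_n(E)-\vPh_m(E)\|\le\|\alpha_n-\alpha_m\|_\infty V$, plus the uniform control of the Riemann-sum discrepancy $\sum_i(\alpha-\alpha_n)(t_i)g(t_i)|I_i|$ via the equi-integrability of $\{\langle x^*,g\rangle\colon\|x^*\|\le 1\}$ together with a uniform Saks--Henstock lemma, after which the Cauchy criterion settles both integrals at once (measurable gauges survive taking minima, so the Birkhoff case needs no separate treatment). This buys an argument that avoids citing the two convergence theorems and correctly isolates where the McShane/Birkhoff nature, as opposed to mere Henstock integrability, enters (restriction to arbitrary measurable sets and equi-integrability over the dual ball); the paper's route is shorter on the page but rests on the cited results. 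Two small repairs to your sketch: to bound the discrepancy you need the gauge to be fine for $g$ itself, since that is where the Saks--Henstock estimate for the scalar family lives, not only for $\alpha_n g$, so take $\delta=\min\{\delta_{\alpha_n g},\delta_g\}$; and the bound comes out as $\|\alpha_n-\alpha\|_\infty(V+\ve)$ (or $\|\alpha_n-\alpha\|_\infty V$ up to a fixed constant) rather than exactly $\|\alpha_n-\alpha\|_\infty V$, which is harmless in your $3\ve$ scheme.
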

\begin{proof}
Let $(\alpha_n)_n$ be a uniformly bounded sequence of simple functions on \id\  that is uniformly convergent to $\alpha$. For each $n\in\N$ let $\nu_n$ be the indefinite McShane integral of $\alpha_n \,g$ and let $\nu_{\alpha{g}}$ be the indefinite Pettis integral of $\alpha\,g$. We have the following relations:
$
\lim_n\|\alpha_n(t)g(t)-\alpha(t)g(t)\|=0\quad\mbox{for every}\;t\in [0,1]
$
and, by the Lebesgue Dominated Convergence Theorem,
$
\lim_n\langle{x^*,\nu_n(E)}\rangle=\langle{x^*,\nu_{\alpha{g}}(E)}\rangle\quad\mbox{for every}\;x^*\in{X^*}\;\mbox{and}\;E\in\mcL.
$
According to \cite[Theorem 2I]{FM}  $\alpha g$ is McShane integrable.
\\
If $g$ is Birkhoff  integrable, then each function $\alpha_n g$ is integrable in the same way. The Birkhoff integrability of $\alpha g$ follows from \cite[Theorem 4]{BP}.
\end{proof}

We need also the following result:
\begin{prop}\label{T2}
{\rm (\cite[Corollary 1.5]{mu15}) }
If a multifunction  $G:[0,1] \to{cwk(X)}$ (resp. $ck(X)$)  is  Pettis
integrable in $cwk(X)$  (resp. $ck(X)$), and $f$ is a scalarly measurable selection of $G$, then
 $f$ is  Pettis
 integrable.
\end{prop}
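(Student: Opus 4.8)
The plan is to reduce Pettis integrability of $f$ to a single point: that the Dunford (weak$^*$) integral of $f$ over each measurable set actually lies in $X$, and in fact inside the corresponding value of the set-valued integral. First I would verify that $f$ is scalarly integrable. Since $f(t)\in G(t)$ for every $t$, for each $x^*\in X^*$ we have
\[
-s(-x^*,G(t))\le\langle x^*,f(t)\rangle\le s(x^*,G(t)),\qquad t\in[0,1].
\]
Both bounds are $\lambda$-integrable because $G$ is Pettis integrable in $cwk(X)$ (so $s(\pm x^*,G(\cdot))\in L^1[0,1]$), while $\langle x^*,f(\cdot)\rangle$ is measurable since $f$ is a scalarly measurable selection; hence $\langle x^*,f(\cdot)\rangle\in L^1[0,1]$ for every $x^*$. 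Consequently, for each $E\in\mcL$ the assignment $x^*\mapsto\int_E\langle x^*,f\rangle\,d\lambda$ is a bounded linear functional on $X^*$, i.e. an element $\xi_E\in X^{**}$, the Dunford integral of $f$ over $E$.

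The heart of the argument is to locate $\xi_E$ inside $X$. Integrating the inequality above over $E$ gives
\[
\langle \xi_E,x^*\rangle=\int_E\langle x^*,f\rangle\,d\lambda\le\int_E s(x^*,G)\,d\lambda=s\Bigl(x^*,\int_E G\,d\lambda\Bigr)
\]
for every $x^*\in X^*$, where $\int_E G\,d\lambda\in cwk(X)$ denotes the Pettis integral of $G$. Now view $C:=\int_E G\,d\lambda$ through the canonical embedding $J\colon X\to X^{**}$. Because $J$ is a homeomorphism from $(X,\text{weak})$ onto its image endowed with $\sigma(X^{**},X^*)$, and $C$ is convex and weakly compact in $X$, the image $J(C)$ is convex and $\sigma(X^{**},X^*)$-compact, hence $\sigma(X^{**},X^*)$-closed in $X^{**}$, and its support function over $X^*$ coincides with $s(\cdot,C)$. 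By the Hahn--Banach separation theorem in the dual pair $(X^{**},X^*)$, a $\sigma(X^{**},X^*)$-closed convex set is the intersection of the closed half-spaces containing it; so the displayed inequality forces $\xi_E\in J(C)$. Identifying $X$ with $J(X)$, this reads $\xi_E\in\int_E G\,d\lambda\subseteq X$, that is, the Dunford integral is represented by a genuine element of $X$.

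It then remains to record that $E\mapsto\xi_E$ is the Pettis integral of $f$: by construction $\langle x^*,\xi_E\rangle=\int_E\langle x^*,f\rangle\,d\lambda$ for all $x^*\in X^*$ and all $E\in\mcL$, which is exactly the defining property, and countable additivity of $E\mapsto\xi_E$ follows from the Orlicz--Pettis theorem once scalar countable additivity is noted. The case $G\colon[0,1]\to ck(X)$ is subsumed, since compact values are weakly compact and the selection integral then lands in $\int_E G\,d\lambda\in ck(X)$. The main obstacle is precisely the passage from Dunford to Pettis integrability carried out in the second paragraph: one must exploit the weak compactness of the values of $G$ (the very reason the statement is phrased in $cwk(X)$) to guarantee that the a priori $X^{**}$-valued integral does not escape $X$. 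Without weak compactness the separation argument would only place $\xi_E$ in the $\sigma(X^{**},X^*)$-closure of $C$, which need not meet $X$.
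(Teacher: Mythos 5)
Your proof is correct. The paper itself offers no proof of Proposition~\ref{T2} --- it is quoted directly from \cite[Corollary 1.5]{mu15} --- and your argument (scalar domination $-s(-x^*,G)\le\langle x^*,f\rangle\le s(x^*,G)$ giving Dunford integrability, then Hahn--Banach separation in the dual pair $(X^{**},X^*)$ against the $\sigma(X^{**},X^*)$-compact image of the weakly compact set $\int_E G\,d\lambda$ to pull the Dunford integral $\xi_E$ back into $X$) is essentially the standard proof given in that reference, with the role of weak compactness of the values correctly identified as the decisive point.
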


\begin{prop}\label{p6}
Assume that
$G$ is determined by $g$ and it is
 McShane (Birkhoff)  integrable. Then also $g$ is McShane (Birkhoff) integrable. If $G$ is variationally Henstock integrable, then $g$ is variationally Henstock and Pettis integrable.
\end{prop}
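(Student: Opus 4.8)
The plan is to reduce everything to a single pointwise identity and two elementary estimates coming from it, and then feed these into the appropriate Cauchy criteria. Since $G(t)=\mathrm{conv}\{0,g(t)\}$, for every $t$ and every $x^*\in X^*$ one has $s(x^*,G(t))=\big(\langle x^*,g(t)\rangle\big)^+$ and $s(-x^*,G(t))=\big(\langle x^*,g(t)\rangle\big)^-$, hence
\[
\langle x^*,g(t)\rangle=s(x^*,G(t))-s(-x^*,G(t)).
\]
Writing $\sigma(g,P):=\sum_i g(t_i)|I_i|$ and $\sigma(G,P):=\sum_i G(t_i)|I_i|$ for the Riemann sums attached to a tagged family $P$, and using that $s(y^*,\cdot)$ is additive and positively homogeneous (so $s(y^*,\sigma(G,P))=\sum_i s(y^*,G(t_i))|I_i|$) together with $|s(y^*,A)-s(y^*,C)|\le d_H(A,C)$ for $y^*\in B_{X^*}$, the identity applied to both $x^*$ and $-x^*$ and supremised over $x^*\in B_{X^*}$ yields
\[
\|\sigma(g,P)-\sigma(g,P')\|\le 2\,d_H\big(\sigma(G,P),\sigma(G,P')\big),\qquad \|g(t)-g(s)\|\le 2\,d_H\big(G(t),G(s)\big).
\]
The second inequality is the converse companion of Lemma \ref{L2}; these two estimates are the engine of the whole proof.

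For the McShane (and Birkhoff) case I would use the first estimate. If $G$ is McShane integrable, choose for $\ve>0$ the gauge $\delta$ given by \eqref{e14} for $\ve/4$; then for any two $\delta$-fine McShane partitions $P,P'$ the triangle inequality gives $d_H(\sigma(G,P),\sigma(G,P'))<\ve/2$, whence $\|\sigma(g,P)-\sigma(g,P')\|<\ve$. Since $X$ is complete, the Cauchy criterion for the McShane integral produces $g$. The Birkhoff case is identical, working with countable partitions finer than a fixed $P_0$; the only extra point is the unconditional convergence of $\sum_n g(t_n)\lambda(A_n)$, which follows from that of $\sum_n G(t_n)\lambda(A_n)$ by applying the same support-function bound to finite blocks (equivalently, by transporting the series through the embedding $i$ and using properties $i_1$, $i_2$).

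For the variational Henstock case I would use the pointwise estimate together with the variational Cauchy criterion, in which the norm sits \emph{inside} the sum: $g$ is variationally Henstock integrable iff for every $\ve>0$ there is a gauge $\delta$ such that $\sum_{i,k}\|g(t_i)-g(s_k)\|\,|I_i\cap J_k|<\ve$ for all $\delta$-fine Perron partitions $\{(I_i,t_i)\}$ and $\{(J_k,s_k)\}$ of $[0,1]$. Because $G$ is variationally Henstock integrable, the Saks--Henstock lemma applied to its primitive $\vPh_G$ (see \eqref{aa}) gives the $d_H$-version $\sum_{i,k} d_H(G(t_i),G(s_k))\,|I_i\cap J_k|<\ve/2$; the pointwise estimate then yields $\sum_{i,k}\|g(t_i)-g(s_k)\|\,|I_i\cap J_k|<\ve$, so $g$ is variationally Henstock integrable.

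Finally, for Pettis integrability I would apply the $1$-Lipschitz functional $s(x^*,\cdot)$ to the variational condition \eqref{aa} for $G$: this shows that $s(x^*,G)=\big(\langle x^*,g\rangle\big)^+$, and (using $-x^*$) also $\big(\langle x^*,g\rangle\big)^-$, is scalarly variationally Henstock, hence Lebesgue, integrable, so $\langle x^*,g\rangle\in L^1$ for every $x^*$ and $g$ is a scalarly measurable selection of $G$. The primitive $\vPh_G$ then exhibits $G$ as Pettis integrable in $cwk(X)$, with $s(x^*,\vPh_G(E))=\int_E s(x^*,G)\,d\lambda$, and Proposition \ref{T2} applied to the selection $g$ (equivalently, Proposition \ref{p12}) gives the Pettis integrability of $g$. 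The step I expect to be the main obstacle is exactly this last upgrade: the variational condition is interval-based, so one must verify that $\vPh_G$ extends to a countably additive $cwk(X)$-valued set function on all of $\Sigma$ representing $E\mapsto\int_E s(x^*,G)\,d\lambda$, which is what legitimises the appeal to Proposition \ref{T2}; by contrast, the transfer of the two gauge-type Cauchy criteria in the McShane, Birkhoff and variational Henstock cases is routine once the two displayed estimates are in hand.
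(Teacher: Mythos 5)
Your first half (McShane and Birkhoff) is correct, and it takes a genuinely different route from the paper: you run a Cauchy-criterion argument from the estimate $\|\sigma(g,P)-\sigma(g,P')\|\le 2\,d_H(\sigma(G,P),\sigma(G,P'))$, obtained from $\langle x^*,g\rangle=s(x^*,G)-s(-x^*,G)$, while the paper deduces from $s(x^*,G)=\langle x^*,g\rangle^+$ the McShane (Birkhoff) equiintegrability of the two families $\{\langle x^*,g\rangle^+:\|x^*\|\le 1\}$ and $\{\langle x^*,g\rangle^-:\|x^*\|\le 1\}$ and takes $\delta=\min\{\delta_G,\delta_{-G}\}$, with the Pettis integral of $g$ (available since $g$ is a scalarly measurable selection of the Pettis integrable $G$) serving as the limit. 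Your version is more self-contained, at the price of invoking completeness and, in the Birkhoff case, transporting unconditional convergence through the embedding $i$; that block argument is fine.

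The variational Henstock half, however, has a genuine gap. The ``variational Cauchy criterion'' you use is not a characterization of vH integrability: only its sufficiency direction is valid. The necessity step fails exactly where you invoke Saks--Henstock, because the lemma for the variational Henstock integral applies only to $\delta$-fine \emph{Perron} partial systems, whereas the pairs $(I_i\cap J_k,t_i)$ are merely McShane-fine: $t_i\in I_i$ but in general $t_i\notin I_i\cap J_k$. Your condition is in fact far stronger than vH integrability: fixing $P$ and letting $Q$ refine inside each $I_i$ with tags chosen to nearly maximize the sums, it yields $\sum_i\int_{I_i}\|g-g(t_i)\|\,d\lambda\le\ve$, i.e.\ $g$ is within $\ve$ of a step function in Bochner norm, hence Bochner integrable. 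For real-valued $f$ one has vH $=$ HK, so your ``iff'' would force every HK integrable function into $L_1$ (contradicted by $f=F'$ with $F(x)=x^2\sin x^{-2}$); and even under the proposition's hypotheses the step is false, since Proposition \ref{partial} produces vH integrable determined multifunctions $G$ from functions $g=\sum_n x_n 1_{E_n}$ that are vH and Pettis but not Bochner integrable, for which your inequality $\sum_{i,k}d_H(G(t_i),G(s_k))\,|I_i\cap J_k|<\ve/2$ cannot hold. The paper avoids double partitions altogether: it first obtains Pettis integrability of $G$ from \cite[Corollary 3.7]{cdpms2016b} and \cite[Theorem 4.3]{cdpms2016} --- which is also precisely what fills the multimeasure-extension obstacle you flagged in your Pettis argument --- hence $g$ is Pettis integrable by Proposition \ref{T2}, and then estimates on the \emph{same} partition, $\bigl\|g(t_i)|I_i|-(P)\int_{I_i}g\bigr\|\le 2\,d_H\bigl(G(t_i)|I_i|,(vH)\int_{I_i}G\bigr)$, via the $\pm x^*$ decomposition; the tags stay in their own intervals, so the vH definition of $G$ applies directly and gives vH integrability of $g$ with primitive $(P)\int_{(\cdot)}g$.
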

\begin{proof}
In order to prove that $g$ is  McShane (Birkhoff) integrable, given a partition $$\{(I_1,t_1),\ldots,(I_p,t_p)\}\,\, \mbox{  of    }\, [0,1],$$ we have to evaluate the number
\begin{eqnarray}\label{e16}\notag
\sup_{\|x^*\|\leq 1}\left|\sum_is(x^*,G(t_i))|I_i|-\int_0^1s(x^*, G(t))\,dt\right|=d_H\left(\sum_i G(t_i)|I_i|,\int_0^1 G(t)\,dt\right)
\end{eqnarray}
Since
$
s(x^*, G(t))=\langle{x^*,g(t)}\rangle^+
$, we have
$$
d_H\left(\sum_i G(t_i)|I_i|,\int_0^1 G(t)\,dt\right)=\sup_{\|x^*\|\leq 1}\left|\sum_i\langle{x^*,g(t_i)\rangle}^+|I_i|-\int_0^1\langle{x^*,g(t)}\rangle^+\,dt\right|
$$
and so, if $G$ is McShane (Birkhoff) integrable, then the family $\{\langle{x^*,g}\rangle^+\colon \|x^*\|\leq 1\}$ is McShane (Birkhoff) equiintegrable.
\\
Replacing $G$ by $-G$, we obtain McShane integrability of $-G$, what yields the equiintegrability of  the family $\{\langle{x^*,-g}\rangle^+\colon \|x^*\|\leq 1\}$. But $\langle{x^*,-g}\rangle^+=\langle{x^*,g}\rangle^-$. Consequently, if $\delta_{G}$ is chosen for $\{\langle{x^*,g}\rangle^+\colon \|x^*\|\leq 1\}$ and $\delta_{-G}$ for $\{\langle{x^*,g}\rangle^-\colon \|x^*\|\leq 1\}$, then $\delta = \min \{ \delta_{G}, \delta_{-G} \}$ is a proper gauge for $g$. Clearly, if $\delta_{G}$ and $\delta_{-G}$ are measurable, then also $\delta$ is measurable. Thus, if $G$ is McShane (Birkhoff) integrable, then also  $g$ is McShane (Birkhoff) integrable.\\
Assume now that $G$ is variationally Henstock integrable. It follows from
\cite[Corollary 3.7]{cdpms2016b} and \cite[Theorem 4.3]{cdpms2016}
that $G$ is Pettis integrable and hence (cf. Proposition \ref{T2}) also $g$ is Pettis integrable.  Then
\begin{eqnarray*}
\sum_id_H\left(G(t_i)|I_i|,(vH)\int_{I_i} G\right)
&=&\sum_i\sup_{\|x^*\|\leq 1}\left|\langle{x^*,g(t_i)\rangle}^+|I_i|-\int_{I_i}\langle{x^*,g(t)}\rangle^+\,dt\right|\\
&=&\sum_i\sup_{\|x^*\|\leq 1}\left|\int_{I_i}\langle{x^*,g(t)}\rangle^-\,dt-\langle{x^*,g(t_i)\rangle}^-|I_i|\right|
\end{eqnarray*}
It follows that
$$
2\sum_id_H\left(G(t_i)|I_i|,\int_{I_i} G(t)\,dt\right)\geq \sum_i\biggl\|g(t_i)|I_i|-\int_{I_i}g(t)\,dt\biggr\|
$$
and that proves the vH-integrability of $g$.
\end{proof}

\begin{thm}\label{glim}
If $g:[0,1]\to X$, then the multifunction  $G$ determined by $g$ is McShane (Birkhoff) integrable if and only if $g$ is integrable in the same way.
\end{thm}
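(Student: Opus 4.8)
The plan is to prove the two directions separately, exploiting the fact that Proposition~\ref{p6} already gives one implication entirely.

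\medskip

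\noindent\textbf{The ``only if'' direction.} Suppose $G$ is McShane (resp. Birkhoff) integrable. This is exactly the content of the first assertion of Proposition~\ref{p6}, which shows that $g$ is then McShane (resp. Birkhoff) integrable. So this direction requires no new argument and is simply cited.

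\medskip

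\noindent\textbf{The ``if'' direction.} The real work is to show that if $g$ is McShane (resp. Birkhoff) integrable, then the determined multifunction $G(t)=\mathrm{conv}\{0,g(t)\}$ is integrable in the same sense. The natural strategy is to pass through the R\aa dstr\"om embedding $i\colon ck(X)\to l_\infty(B_{X^*})$ and to reduce the multivalued integrability of $G$ to the (vector-valued) integrability of the composition $i\circ G$ in the Banach space $l_\infty(B_{X^*})$; by property $(i_2)$ the Hausdorff-metric Cauchy/approximation conditions defining McShane and Birkhoff integrability for $G$ translate verbatim into the norm conditions for $i\circ G$. Thus it suffices to show that $t\mapsto i(G(t))$ is McShane (resp. Birkhoff) integrable in $l_\infty(B_{X^*})$. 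Now for each fixed $t$ one has, as computed in the proof of Proposition~\ref{p6}, the pointwise identity
\begin{equation*}
i(G(t))(x^*)=s(x^*,G(t))=\langle x^*,g(t)\rangle^{+},\qquad x^*\in B_{X^*}.
\end{equation*}

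\medskip

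\noindent The decisive idea is to write this positive part as a combination of $g$ and of the scalar function $\alpha(t):=\mathrm{sign}^{+}$ applied componentwise; more precisely, one wants to realize $i\circ G$ as built from $g$ using the embedding together with bounded measurable scalar multipliers so that Lemma~\ref{L1} can be invoked. The cleanest route is this: decompose the problem via $i_4$ and the relation $a^{+}=\tfrac12(a+|a|)=\max\{a,0\}$, noting that $i(G(t))=\max\{i(\mathrm{conv}\{0,g(t)\}),0\}$ corresponds to taking $\overline{co}$ of $\{0\}\cup\{g(t)\}$, i.e. $i(G(t))=\max\{i(\{0\}),i(\{g(t)\})\}=\max\{0,\langle\cdot,g(t)\rangle\}$. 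Since $g$ is McShane (resp. Birkhoff) integrable in $X$, the constant-zero multifunction and the single-valued $g$ are trivially integrable in $l_\infty(B_{X^*})$ after embedding, and the lattice operation $\max\{\cdot,0\}$ preserves the equiintegrability of the associated family $\{\langle x^*,g\rangle\colon \|x^*\|\le 1\}$; this is where one appeals to Lemma~\ref{L1} (to handle the scalar truncation through uniformly bounded simple multipliers) together with \cite[Theorem 2I]{FM} for the McShane case and \cite[Theorem 4]{BP} for the Birkhoff case.

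\medskip

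\noindent\textbf{The main obstacle.} The delicate point is that $l_\infty(B_{X^*})$ is non-separable, so one cannot blithely treat $i\circ G$ as a strongly measurable vector-valued map and must work directly with the equiintegrability of the scalar family $\{\langle x^*,g\rangle^{+}\colon\|x^*\|\le 1\}$, matching a single gauge $\delta$ simultaneously for all $x^*$. The crux is therefore to show that McShane (resp. Birkhoff) equiintegrability of $\{\langle x^*,g\rangle\colon\|x^*\|\le1\}$ is inherited by $\{\langle x^*,g\rangle^{+}\colon\|x^*\|\le1\}$ uniformly in $x^*$, so that the gauge produced for $g$ serves for $G$; this uniformity is exactly what the embedding identity $d_H=\|\cdot\|_\infty$ lets us package, and combining it with the cited equiintegrability characterizations of the McShane and Birkhoff integrals closes the argument.
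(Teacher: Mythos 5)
Your ``only if'' direction is fine and coincides with the paper's: it is exactly Proposition \ref{p6}, quoted. The ``if'' direction, however, has a genuine gap at its decisive step. The reduction through the R{\aa}dstr\"{o}m embedding and the identity $i(G(t))(x^*)=s(x^*,G(t))=\langle x^*,g(t)\rangle^{+}$ is correct as far as it goes, but you then assert that ``the lattice operation $\max\{\cdot,0\}$ preserves the equiintegrability of the family $\{\langle x^*,g\rangle\colon\|x^*\|\le 1\}$,'' i.e.\ that the gauge $\delta$ witnessing McShane (Birkhoff) integrability of $g$ also controls $\sup_{\|x^*\|\le 1}\bigl|\sum_i\langle x^*,g(t_i)\rangle^{+}|I_i|-\int_0^1\langle x^*,g\rangle^{+}\,dt\bigr|$. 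That assertion \emph{is} the theorem, and none of the tools you cite delivers it. Lemma \ref{L1} gives integrability of $\alpha g$ for one fixed bounded measurable multiplier $\alpha$, whereas here $\langle x^*,g\rangle^{+}=\langle x^*,\alpha_{x^*}g\rangle$ with $\alpha_{x^*}=1_{\{\langle x^*,g\rangle>0\}}$ depending on $x^*$, so you need a single gauge working simultaneously for an uncountable family of multipliers; and \cite[Theorem 2I]{FM} and \cite[Theorem 4]{BP} are convergence theorems (they are what proves Lemma \ref{L1}), not uniformity statements. Note also that $\max$ is not additive in $l_\infty(B_{X^*})$, so integrability of the constant $0$ and of the embedded singleton $\{g\}$ says nothing about their lattice maximum. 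The closing sentence, that the identity $d_H=\|\cdot\|_\infty$ ``packages'' the needed uniformity, is circular: the identity merely restates the quantity to be estimated.

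That this step cannot be purely formal is shown by the paper itself: the same ``positive part preserves equiintegrability'' reasoning, applied verbatim to Henstock gauges and Perron partitions, would prove that $G$ is Henstock integrable whenever $g$ is --- contradicting the Remark following Theorem \ref{glim}, where a Henstock non-McShane integrable $g$ determines a $G$ that cannot be Henstock integrable. The paper's actual mechanism is McShane/Birkhoff-specific. It introduces the candidate integral $IS_G=\bigl\{(P)\int_0^1\varphi(t)g(t)\,dt\colon \varphi\in\mathcal{M}\bigr\}$, proves it weakly compact and norm-approximable by simple multipliers, and then bounds $d_H\bigl(\sum_{i}G(t_i)\lambda(I_i),IS_G\bigr)$ in two halves: the Saks--Henstock lemma for the McShane integral (\cite[Lemma 2B]{f1995}) is applied to the refined partitions $(I_i\cap E_j,t_i)$ --- partitions into measurable sets with freely placed tags, available only in the McShane/Birkhoff setting --- and the bang--bang principle (\cite[Corollary 32.3.4]{Ro}: the convex map $K(a_1,\dots,a_n)$ attains its maximum at an extreme point of $[0,1]^n$) converts the finite-subset estimate into uniformity over \emph{all} coefficient choices $a_i\in[0,1]$ at once, hence over all points of the Riemann sum $\sum_i G(t_i)\lambda(I_i)$ and all $x^*$. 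To repair your argument you would have to supply exactly this (or an equivalent) uniformization; as written, the crux is asserted rather than proved.
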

\begin{proof} The ``only if'' part is contained in Proposition \ref{p6}. \\
Assume now that $g$ is integrable in a proper way. We know that  $g$ is Pettis integrable (see \cite{f1995}) and $G$ is Pettis integrable in $cwk(X)$ (see \cite[Theorem 2.6]{mu15} or Proposition \ref{p12}). In particular, all scalarly measurable selections of $G$ are Pettis  integrable.
It is also clear that the set
$$IS_{G}:=\biggl\{(P)\int_0^1s(t)g(t)dt: s\in \mathcal{M}\biggr\},$$
where $\mathcal{M}$ denotes the set of all measurable functions $\varphi:[0,1]\to [0,1]$, is weakly bounded (hence norm bounded) and convex.
We shall prove that $IS_{G}$ is a weakly compact set being the McShane (Birkhoff) integral of $G$ on $[0,1]$.
\\
In order to prove the weak compactness of $IS_{G}$ take an arbitrary sequence $\{s_n\colon s_n\in\mathcal M\,,n\in\N\}$. Since the set $\{s_n\colon s_n\in\mathcal M\,,
n\in\N\}$ is $L_{\infty}[0,1]$-bounded in $L_1[0,1]$, it is weakly relatively compact in $L_1[0,1]$. Assume for simplicity that $s_n\to{s}$ weakly in  $L_1[0,1]$, where $s\geq 0$ everywhere. It is clear that one may assume that $s\in{\mathcal M}$. It follows from the Lebesgue Dominated Convergence Theorem that
$$
\lim_n\int_0^1s_nh\,d\lambda =  \int_0^1sh\,d\lambda\qquad\mbox{for every }h\in {L_1[0,1]}.
$$
In particular, if $x^*\in{X^*}$, then
$$
\lim_n\int_0^1s_n(t)\langle{x^*,g(t)}\rangle\,dt =  \int_0^1s(t)\langle{x^*,g(t)}\rangle\,dt
$$
and so
$$
\lim_n\int_0^1s_n(t)g(t)\,dt =  \int_0^1s(t)g(t)\,dt\in{IS_{G}} \qquad\mbox{weakly in }X.
$$
That proves the required weak compactness of $IS_G$.
\\
Now, since
the family $\mathcal S$ of simple functions from $\mathcal{M}$  is dense in $\mathcal{M}$ with respect to the uniform convergence, we have
\begin{eqnarray}\label{giocaL}
IS_{G}=\overline{\biggl\{(P)\int_0^1s(t)g(t)dt: s\in \mathcal{S}\biggr\}}^w,
\end{eqnarray}
where  $w$ denotes the closure in the weak topology.  But as the set in the parenthesis is convex it is in fact the norm closure. Let us present a short proof of (\ref{giocaL}).
\\
We first observe that Pettis integrability of $g$ implies that $\sup_{\|x^*\|\leq 1}\int_0^1|\langle{x^*,g}\rangle|\,d\lambda=K<\infty$.
Next, let us fix $\varepsilon>0$, and any function $\varphi\in \mathcal{M}$. By the assumption, there exists a simple function $s\in\mathcal{S}$ such that $\|\varphi-s\|_{\infty}\leq {\varepsilon}/{K}$.
\\
If $x^*\in{B_{X^*}}$, then
\begin{eqnarray*}
\biggl| \langle x^*,\int g(t)s(t)dt-\int g(t)\varphi(t)dt \rangle \biggr| &\leq& \int | \langle x^*,g(t) \rangle| \cdot |s(t)-\varphi(t)|dt\leq
\\ &\leq& \|s-\varphi\|_{\infty} \int| \langle x^*,g(t) \rangle|dt\leq\ve.
\end{eqnarray*}
Hence
$$
\left\|\displaystyle{ (P)\int s(t)g(t)dt-(P)\int \varphi(t)g(t)dt} \right\|\leq \varepsilon\,.
$$
Now we shall proceed by proving that $IS_{G}$ is the McShane (Birkhoff) integral of $G$. This means that, for every  $\varepsilon>0$ a (measurable)
gauge $\delta$ can be found such that, as soon as $(I_i,t_i)_{i=1}^n$ is a $\delta$-fine McShane partition of $[0,1]$, then
\begin{eqnarray}\label{e5}
d_H(\sum_{i=1}^n G(t_i)\lambda(I_i),IS_{G})\leq \varepsilon.
\end{eqnarray}
So, fix $\varepsilon>0$. Since $g$ is McShane (Birkhoff) integrable, there exists a (measurable) gauge $\delta$ such that, as soon as $(A_i,t_i)_{i=1}^n$ is a generalized $\delta$-fine McShane partition of $[0,1]$, then
\begin{eqnarray}\label{gauge-g}
\biggl\|\sum_{i=1}^ng(t_i)\lambda(A_i)-\int_0^1 g(t)dt\biggr\|\leq \varepsilon.
\end{eqnarray}
Moreover, thanks to the well known Henstock Lemma for the McShane integral (see e.g.\cite[Lemma 2B]{f1995}), it is also possible, for the same partitions, to obtain
\begin{eqnarray}\label{henstocklm}
\biggl\|\sum_{j\in F}\biggl[g(t_j)\lambda(A_j)-\int_{A_j}g(t)dt\biggr]\biggr\|\leq \varepsilon,\end{eqnarray}
whenever $F$ is any finite subset of $\{1,...,n\}$.
So let $(I_i,t_i)_{i=1}^n$ be a $\delta$-fine  McShane partition of $[0,1]$. Let us evaluate
$d_H\biggl(\sum_{i=1}^n G(t_i)\lambda(I_i),IS_{G} \biggr).$
Due to (\ref{giocaL}), we can write
\begin{eqnarray*}
d_H\biggl(IS_{G},\sum_{i=1}^n G(t_i)\lambda(I_i)\biggr)&=& d_H\biggl(\biggl\{\int_0^1\varphi(t)g(t)dt: \varphi \in \mathcal{S}\biggr\},\sum_{i=1}^n G(t_i)\lambda(I_i)\biggr).
\end{eqnarray*}
In order to obtain (\ref{e5}), we will prove that for every
$x\in\sum_{i=1}^n G(t_i)\lambda(I_i)$
there exists $y\in{IS_{G}}$ with $\|x-y\|\leq\ve$ and conversely, given an arbitrary $y\in IS_{G}$, there exists
$x\in\sum_{i=1}^n G(t_i)\lambda(I_i)$ with $\|x-y\|\leq\ve$.
Let
$\sum_{i=1}^na_i g(t_i)\lambda(I_i)$ be a point of $\sum_{i=1}^n G(t_i)\lambda(I_i)$.
We are looking for a proper $\varphi\in{\mathcal S}$. Let
$\varphi:=\sum_{i=1}^na_i1_{I_i}$. \\
Now,  observe that  the mapping $K:[0,1]^n\to [0,+\infty)$, defined as
$$K(a_1,...,a_n)=\biggl\|\sum_{i=1}^na_i\biggl[g(t_i)\lambda(I_i)- \int_{I_i} g(t)dt\biggr]\biggr\|$$
is convex, and therefore it attains its maximum in one of the extreme points of  its domain (bang-bang principle, see e.g. \cite[Corollary 32.3.4]{Ro}):
in other words, there exists a finite subset
$F\subset \{1,...,n\}$ such that
$\max_{(a_i)}K(a_1,...,a_n)=K(e_1,...,e_n)$,
where $e_i=1$ if $i\in F$ and $e_i=0$ otherwise.
Then we have
\begin{eqnarray*}\lefteqn{
\biggl\|\sum_{i=1}^n \left[ a_i g(t_i)\lambda(I_i)-
\int_{I_i} \varphi(t) g(t) dt \right]
\biggr\|}\\
&\leq &
K(e_1,...,e_n)=
\biggl\|\sum_{j\in F}\biggl[g(t_j)\lambda(I_j)-\int_{I_j}g(t)dt\biggr]\biggr\|\leq \varepsilon
\end{eqnarray*}
in virtue of (\ref{henstocklm}).\\
We now turn to the other 
inequality.
We will prove that given
$\varphi \in{\mathcal {S}}$, there exists a point
$x\in \sum_{i=1}^n G(t_i)\lambda(I_i)$ with
$\|x-\int \varphi(t) g(t) dt\| \leq \ve$.
In order to find the proper point, we shall associate with every simple function $\varphi\in \mathcal{S}$,
$\varphi:=\sum_{j=1}^m\varphi_j1_{E_j}$,
and the $\delta$-fine McShane partition $\{(I_1,t_1),\ldots,(I_n,t_n)\}$,
the generalized $\delta$-fine McShane partition  $(I_i\cap E_j,t_i)_{i,j}$,  for every $j=1,...m$.
\\
Notice now that
$G(t_i)\lambda(I_i)=\sum_{j=1}^m G(t_i)\lambda(I_i\cap E_j)$,
for every $i\leq n$.  So given $\varphi=\sum_{j=1}^m\varphi_j1_{E_j}\in \mathcal{S}$, we need a point $ \sum_{i,j}a_{i,j}g(t_i)\lambda(I_i\cap E_j)\in\sum_{i,j}G(t_i)\lambda(I_i\cap E_j)$
such that
$$\biggl\|\sum_{i,j}a_{i,j}g(t_i)\lambda(I_i\cap E_j)-\sum_{i,j}\varphi_j\int_{I_i\cap E_j}g(t)\,dt\biggr\|\leq \ve\,.$$
Let us take $a_{i,j}=\varphi_j$.
We have to evaluate the number
$$\biggl\|\sum_{i,j}\varphi_jg(t_i)\lambda(I_i\cap E_j)-\sum_{i,j}\varphi_j\int_{I_i\cap E_j}g(t)\,dt\biggr\|\,.$$
Let
\begin{eqnarray*}
L(b_1,...,b_{m})=\biggl\|\sum_{i=1}^n\sum_{j=1}^mb_j\biggl[g(t_i)\lambda(I_i\cap E_j)- \int_{I_i\cap E_j} g(t)dt\biggr]\biggr\|
\end{eqnarray*}
be defined for $0\leq{b_j}\leq 1\,, j=1,\ldots,m$.
Applying once again the bang-bang principle and (\ref{henstocklm}), we have for a set
$F\subset \{1,\ldots,m\}$
\begin{eqnarray*}
\sup_{(b_j)}L(b_1,...,b_{m})\leq L(e_1,...,e_{m})
=
\biggl\|\sum_i\sum_{j\in F}\biggl[g(t_i)\lambda(I_i\cap{E_j})-\int_{I_i\cap{E_j}}g(t)dt\biggr]\biggr\|\leq \varepsilon
\end{eqnarray*}
since the partition $(I_i\cap E_j,t_i)_{i,j}$ is $\delta$-fine and so we may apply (\ref{henstocklm}).\\
Thus,
$$\biggl\|\sum_{i,j}\varphi_jg(t_i)\lambda(I_i\cap E_j)-\sum_{i,j}\varphi_j\int_{I_i\cap E_j}g(t)\,dt\biggr\|\leq \ve$$
and so, finally,
$$d_H\biggl(\sum_{i=1}^n G(t_i)\lambda(I_i),IS_{G} \biggr)\leq \varepsilon$$
for all $\delta$-fine McShane decompositions. It follows that $G$ is McShane (Birkhoff) integrable, with integral $IS_{G}$.
\end{proof}


\begin{rem}\rm Using the above result we can conclude that in general Henstock integrability of $g$ does not imply  Henstock integrability of $G$ generated by $g$. In fact, let $g$ be a Henstock but not McShane integrable function. If, by contradiction, $G$ is Henstock integrable then, by \cite[Proposition 3.1]{cdpms2016b}, $G$ is McShane integrable and then, by Theorem \ref{glim}, $g$ is McShane integrable.
While, for the converse, since $G$ is Henstock and $0 \in G(t)$, then $G$ is McShane integrable, by \cite[Corollary 3.2]{dp}, and so it is possible to apply Theorem \ref{glim}.
\end{rem}

  We do not know if a variationally Henstock and Pettis integrable function determines a variationally Henstock integrable multifunction. We have only the following partial result:

\begin{prop}\label{partial}
	Let $g$ be a Pettis and variationally Henstock integrable function of the form:
	$g(t)=\sum_{n=1}^{\infty}x_n 1_{E_n}(t),$
	where $E_n \subseteq  I_n:=(a_{n+1}, a_n)$ for every $n$, with $a_1=1$ and $\lim_{n \to \infty} a_n \downarrow 0$.
	Then $G$ determined by $g$ is variationally Henstock integrable.
\end{prop}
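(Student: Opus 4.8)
The plan is to transport everything through the Rådström embedding, where the two–valued structure of $g$ on each block $I_n$ becomes transparent. Put $h:=i\circ G:[0,1]\to l_\infty(B_{X^*})$ and $y_n:=i(\mathrm{conv}\{0,x_n\})=\langle\cdot,x_n\rangle^+$; by $i_1$)–$i_2$) we get $h=\sum_n y_n\mathbf 1_{E_n}$, a function of exactly the same shape as $g$, and the identity $d_H(A,C)=\|i(A)-i(C)\|_\infty$ reduces the statement to the variational Henstock integrability of $h$ in $l_\infty(B_{X^*})$. As candidate primitive I take $\vPh_G(I):=(P)\int_I G$, equivalently $\nu_h(I):=i(\vPh_G(I))=(P)\int_I h$, whose support function is $\int_I\langle\cdot,g\rangle^+\,d\lambda$; since $g$, and hence $G$ by Proposition~\ref{p12}, is Pettis integrable, $\nu_h$ is a well defined measure with values in the closed cone $i(ck(X))$ (property $i_3$)), so that $\vPh_G$ is a genuine $ck(X)$–valued multimeasure.

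The structural core is a pointwise identity. Denoting by $\nu_g$ the $vH$–primitive of $g$ (which coincides with its indefinite Pettis integral), I claim that \emph{whenever $I_j\subseteq\overline{I_n}$ for a single $n$} one has
\[
\|\nu_h(I_j)-h(t_j)|I_j|\|_\infty=\|\nu_g(I_j)-g(t_j)|I_j|\|.
\]
Indeed, on such an $I_j$ both $g$ and $h$ take only the values $x_n,0$ resp. $y_n,0$, on the common set $E_n\cap I_j$, and $\|y_n\|_\infty=\|x_n\|$; a direct computation as in Proposition~\ref{p6} shows that both sides equal $\|x_n\|\,\lambda(I_j\setminus E_n)$ if $t_j\in E_n$ and $\|x_n\|\,\lambda(E_n\cap I_j)$ if $t_j\notin E_n$. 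Consequently, if $\delta_0$ is a gauge witnessing the $vH$–integrability of $g$ for $\ve/4$, then for any collection of $\delta_0$–fine tagged intervals each sitting inside a single block the $h$–variation sum equals the corresponding $g$–variation sum, which is $\le\ve/4$ by the Saks–Henstock lemma.

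It remains to engineer a gauge $\delta\le\delta_0$ confining the tagged intervals. On each block interior $t\in I_n$ I set $\delta(t)\le\min\{\delta_0(t),\ \mathrm{dist}(t,\{a_{n+1},a_n\})\}$, so that every $\delta$–fine interval with an interior tag lies in $\overline{I_n}$ and is handled by the identity above. The only intervals meeting several blocks are then those tagged at some $a_n$ or at $0$, where $g$ and $h$ vanish; their contribution is just $\|\nu_h(I_j)\|_\infty\le\sup_{\|x^*\|\le1}\int_{I_j}|\langle x^*,g\rangle|\,d\lambda$. Here I use that Pettis integrability of $g$ makes the family $\{\langle x^*,g\rangle:\|x^*\|\le1\}$ uniformly integrable, so this bound tends to $0$ with $|I_j|$. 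I therefore fix $\eta>0$ with $\sup_{\|x^*\|\le1}\int_0^\eta|\langle x^*,g\rangle|\,d\lambda<\ve/4$ and put $\delta(0)=\eta$, and I choose $\delta(a_n)$ so small that a tag at $a_n$ forces a contribution $<\ve/2^{\,n+2}$. A routine argument shows that the (unique) interval containing $0$ must then be tagged at $0$, and that the remaining intervals cover a set $[c,1]$ meeting only finitely many blocks, so only finitely many boundary tags occur and their total contribution is $<\sum_n\ve/2^{\,n+2}=\ve/4$.

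Summing the three groups yields $\sum_j\|\nu_h(I_j)-h(t_j)|I_j|\|_\infty<\ve$ for every $\delta$–fine Perron partition, which is the desired $vH$–integrability of $G$ with primitive $\vPh_G$. The step I expect to be hardest is the control near $0$: the blocks $I_n$ accumulate there, so infinitely many boundaries lie in every neighbourhood of $0$, and the whole argument hinges on showing that all the uncontrollable many–block behaviour is absorbed into the single interval tagged at $0$ and then dominated through the uniform integrability furnished by Pettis integrability. This is precisely where both hypotheses on $g$ — variational Henstock and Pettis — and the special form $g=\sum_n x_n\mathbf 1_{E_n}$ are used together.
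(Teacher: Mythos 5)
Your proof is correct in substance, but it takes a genuinely different route from the paper's. The paper argues by reduction to known results: since $g$ is variationally Henstock (hence Henstock) and Pettis integrable, Fremlin's theorem \cite[Theorem 8]{f1994} gives McShane integrability of $g$; Theorem \ref{glim} then gives McShane integrability of $G$, hence of $i\circ G$, hence Pettis integrability of $i\circ G$ in $l_{\infty}(B_{X^*})$; finally the block structure $i\circ G=\sum_n y_n 1_{E_n}$ with $y_n=\langle\cdot,x_n\rangle^+$ allows the authors to invoke \cite[Proposition 4.1]{dpmm} to get variational Henstock integrability of $i\circ G$, transferred back to $G$ via \cite[Theorem 2.4]{cdpms2016}. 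You instead verify the variational estimate by hand: your isometric identity $\|\nu_h(I_j)-h(t_j)|I_j|\|_\infty=\|\nu_g(I_j)-g(t_j)|I_j|\|$ for intervals inside a single block is correct (both sides equal $\|x_n\|\,\lambda(I_j\setminus E_n)$ or $\|x_n\|\,\lambda(E_n\cap I_j)$, using $\|y_n\|_\infty=\|x_n\|$), and it lets you recycle $g$'s own vH gauge for the in-block terms --- this replaces the citation of \cite[Proposition 4.1]{dpmm} --- while the tags at $0$ and at the points $a_n$ are absorbed through the uniform integrability of $\{\langle x^*,g\rangle:\|x^*\|\leq 1\}$, a standard consequence of Pettis integrability; this makes the whole McShane detour (Fremlin's theorem and Theorem \ref{glim}) unnecessary. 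What the paper's route buys is brevity; yours buys self-containedness and makes the role of each hypothesis transparent (vH of $g$ controls the blocks, Pettis controls the accumulation of blocks at $0$). Three small repairs are needed. First, do not call $\nu_h$ the Pettis integral of $h$: Pettis integrability of $i\circ G$ in $l_{\infty}(B_{X^*})$ is precisely what the paper needs the McShane step for, and you have not established it; but your computations never use it, so simply define $\nu_h:=i\circ\vPh_G$ with $\vPh_G(I):=(P)\int_I G$, whose support function is $\int_I\langle x^*,g\rangle^+\,d\lambda$. Second, Proposition \ref{p12} gives $\vPh_G$ values in $cwk(X)$, while the definition of vH integrability asks for a $ck(X)$-valued multimeasure; compactness of the values does follow from your own estimates (for $I$ away from $0$, $\vPh_G(I)$ is a finite sum of segments $\lambda(E_n\cap I)\,{\rm conv}\{0,x_n\}$, and since $|\vPh_G([0,c])|\to 0$ as $c\to 0$ every value is a $d_H$-limit of compact convex sets, hence compact), but this should be said. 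Third, two abutting intervals of a partition may share the tag $a_n$, so budget $\ve 2^{-(n+3)}$ per interval (or bound the pair jointly by the integral over $(a_n-\delta(a_n),a_n+\delta(a_n))$); this only changes constants.
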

\begin{proof}
	By Fremlin  \cite[Theorem 8]{f1994} we know that $g$ is McShane integrable and so by  Theorem \ref{glim} $G$ is McShane integrable.
	For each $x^*$ we have $s(x^*,G(t))=(x^*g)^+(t)$    and so, for each $t\in [0,1]$:
	$$i(G(t)):=x^*\mapsto \sum_n \langle x^*,x_n\rangle^+1_{E_n}(t)\in l^{\infty}(B_{X^*}),$$
	since $G$ is McShane if and only if
	$i\circ{G}$ is McShane integrable. The McShane integrability of $i\circ{G}$ implies its  Pettis integrability in  $l^{\infty}(B_{X^*})$.
	So we consider now the sequence $(y_n)_n \in  l^{\infty}(B_{X^*})$ given by:
	$y_n(x^*) :=  \langle x^*,x_n\rangle^+$ and we apply \cite[Proposition 4.1]{dpmm} to $i\circ{G}$.
	Using this theorem $i\circ{G}$ is vH-integrable and then, as said in the consequences of  \cite[Theorem 2.4]{cdpms2016}, $G$ is vH-integrable.
\end{proof}

Below we present a few examples of multifunctions generated by functions. 
\begin{exa}\label{ex10}
\rm
	Let $X$ be a Banach space such that McShane and Pettis integrability of $X$-valued functions are not equivalent  (this happens, in general, in non separable Banach space $X$, cf. \cite{FM,r2008,DP,f1994,f1995}). Then, there exists a Pettis integrable multifunction $\vG\colon [0,1]\to{ck(X)}$ such that $0\in{\vG(t)}$ for every $t\in[0,1]$, but $\vG$ is neither Henstock nor McShane integrable.
\end{exa}
\begin{proof}
	Let $g:[0,1]\to{X}$ be Pettis integrable but not McShane integrable. Set $\vG(t):={\rm conv} \{0,g(t)\}$. Then $\vG$ is Pettis integrable. If $\vG$ were Henstock integrable, then by \cite[Proposition 3.1]{cdpms2016}, $\vG$ would be McShane integrable. But then from Theorem \ref{glim} 
	  follows McShane integrability of $g$. A contradiction.
\end{proof}
\begin{exa}\label{ex11}
\rm
	Let $X$ be a Banach space such that McShane and Birkhoff integrability of $X$-valued functions are not equivalent,  as an example the space $l_{\infty} ([0,1])$ can be considered (cf. \cite{r2009}), while the two integrations  are  equivalent, for example, in  Banach  spaces  with  weak$^{\star}$
separable dual unit ball (cf. \cite{BS2011}). Then, there exists a McShane integrable multifunction $\vG:[0,1]\to{ck(X)}$ such that $0\in{\vG(t)}$ for every $t\in[0,1]$, but $\vG$ is neither $\mcH$ integrable (i.e. the version of the Henstock integral when only measurable gauges are allowed) nor Birkhoff integrable.
\end{exa}
\begin{proof}
	We take in Example \ref{ex10} a function $g$ that is McShane but not Birkhoff integrable and follow the same path.
\end{proof}
\begin{exa}{\rm let $X=\ell_2([0,1])$ and let $\{e_t:t\in(0,1]\}$ be its orthonormal system. If $G(t):=\mbox{conv} \{0,{e_t}/t\}$, then $s(x,G)=0$ a.e. for each separate $x\in\ell_2[0,1]$ and so the Pettis integral is equal to zero.
But $G$ is  not Henstock integrable. It is enough to show that $g$ is not Henstock integrable.
So let $\delta$ be any gauge and $\{(I_1,t_1),\ldots,(I_n,t_n)\}$ be a $\delta$-fine Perron partition of $[0,1]$.
 Assume that $0\in{I_1}$, then $t_1\leq|I_1|$. Hence 
$|I_1|/t_1 \geq1$ for $t_1>0$ and so
 $$
  \biggl\| \, \sum_{i\leq{n}}\dfrac{e_i}{t_i} |I_i|\, \biggr\|\geq1\,.
 $$
 Consider now the multifunction given by $H(t):=\mbox{conv}\{0,e_t\}$, where $X$ is as above. We are going to prove that $H$ is Birkhoff-integrable (hence also McShane). Given $\ve>0$, let $n\in\N$ be such that 
$1/\sqrt{n}<\ve$ and $\delta$ be any measurable gauge, pointwise less than $1/n$. If $\{(I_1,t_1),\ldots,(I_m,t_m)\}$ is a $\delta$-fine  partition of $[0,1]$ and $\{J_1,\ldots,J_n\}$ is the division of $[0,1]$ into closed intervals of the same length, then
\begin{eqnarray*}
 \biggl\|\, \sum_{i\leq{m}}e_i|I_i| \, \biggr\| &=& \biggl\|\, \sum_{i\leq{m}}\sum_{k\leq{n}}e_i|I_i\cap{J_k}| \, \biggr\|=\biggl\|\, \sum_{k\leq{n}}\sum_{i\leq{m}}e_i|I_i\cap{J_k}|\, \biggr\|\\
 &=&
  \biggl(\sum_{k\leq{n}}\sum_{i\leq{m}}|I_i\cap{J_k}|^2\biggr)^{1/2}\leq \frac{1}{\sqrt{n}}<\ve\,.
\end{eqnarray*}
(We apply here the inequality $\sum_{i}a_i^2\leq (\sum_{
}a_i)^2$. For each fixed $k\leq{n}$ we take as $a_i$ the number $|I_i\cap{J_k}|$.
)}
\end{exa}

The subsequent  example can be used in order to construct multifunctions that are integrable in one way but not in another one.
\begin{exa}\label{ex1}
	\rm Let $X$ be an arbitrary  Banach space and let $W\in{cb(X)}$ be an uncountable set containing zero.
	Let $f\colon[0,1]\to{X}$ be a scalarly DP--integrable function and let $r\colon[0,1]\to(0,\infty)$
	be a Lebesgue integrable function. Define $\vG\colon[0,1]\to{cb(X)}$ by
	$\vG(t)\colon=r(t)W+f(t)$. One can easily check that $s(x^*,\vG(t))=x^*f(t)+r(t)\sup_{x\in{W}}x^*(x)$. It follows that
	$\vG$ is scalarly DP--integrable.
	
	If we assume that $f$ is DP--integrable, then $\vG \in \mathbb{DP}(cb(X))$ and the decomposition has the following form (with $G(t)=r(t)\,W$):
	$$
	(DP)\int_I\vG=W\int_Ir\,d\lambda+(DP)\int_If\qquad\mbox{for
		every}\quad I\in\mcI.
	$$
	One may replace DP by HKP,  
	H, $\mcH$ and vH.
	For example taking as $f$ a function that is DP but not HKP integrable
	(consider  e.g.
the approximate derivative of  \cite[Example 6.20(c)]{Gor}),
	HKP but not
	H
	(it is enough to take a scalar function $f \in H \setminus L_1$)
	and HKP
	but not Pettis integrable (cf. \cite{gm}), 
	we obtain nontrivial examples of multifunctions integrable in different ways in $cb(X),\,cwk(X)$ or $ck(X)$, depending on the set $W$. DP may be also replaced by Pettis, McShane, Birkhoff or Bochner. In such a case $I$ may be replaced by $E\in\mcL$.
\end{exa}

\begin{que}\label{q1}{\rm Does there exist a positive Henstock integrable multifunction $\vG:[0,1]\to{cb(c_0)}$ that is Pettis integrable but not strongly (i.e. its primitive is not an $h$-multimeasure)?\\
		Does there exist a  Henstock but not McShane integrable multifunction $\vG:[0,1]\to{cb(c_0)}$ possessing a McShane integrable selection?	If in Example \ref{ex1} the function $f$ is strongly measurable and Henstock but not McShane integrable, then $\vG$ does not have any McShane integrable selection.}
\end{que}
\section*{Acknowledgments}
This is a preprint version of an article published in International Journal of Approximate Reasoning. The final authenticated version is available online at:\\ https://www.sciencedirect.com/science/article/abs/pii/S0888613X19301562?via\%3Dihub\\

\noindent{\em \small  No one dies on Earth, as long as he lives in the heart of those who remain; Domenico Candeloro: \dag \,  May, 3, 2019}


\begin{thebibliography}{99}
\bibitem{am}
Agahi, H., Mesiar, R.: 
On Choquet-Pettis expectation of Banach-valued functions: a counter example,
Internat. J. Uncertain. Fuzziness Knowledge-Based Systems {\bf 26} (2),  (2018), 
255-259.

\bibitem{BP}  Balcerzak, M.,  Potyra{\l}a, M.:  Convergence theorems for the Birkhoff integral, Czech. Math. J. {\bf 58}, (2008), 1207-1219

\bibitem{bcs2015} Boccuto, A.,   Candeloro, D.,   Sambucini,  A.R.: Henstock multivalued integrability in Banach
 lattices with respect to pointwise non atomic measures, Atti Accad. Naz. Lincei Rend. Lincei Mat. Appl.
 {\bf 26} (4), (2015),  363--383 Doi: 10.4171/RLM/710

\bibitem{bms}  Boccuto, A., A.M. Minotti, A.M.,  Sambucini, A. R.:  Set-valued Kurzweil-Henstock integral in Riesz space setting,   PanAmerican Mathematical Journal {\bf 23} (1), (2013), 57--74.

 \bibitem{BS2011}  Boccuto, A.,  Sambucini, A. R.: A note on comparison between Birkhoff and McShane-type integrals for multifunctions, Real Anal. Exchange  \textbf{37} (2),  (2012), 315-324.

\bibitem{ccgs}
 Candeloro, D.,   Croitoru, A.,  Gavrilut A.,  Sambucini,  A.R.: An extension of the Birkhoff integrability for multifunctions, Mediterranean J. Math. {\bf 13} (5),   (2016), 2551-2575, Doi: 10.1007/s00009-015-0639-7

\bibitem{cdpms2016}
Candeloro, D.,  Di Piazza, L.,   Musia{\l}, K.,  Sambucini,  A.R.: Gauge integrals and selections of weakly compact valued multifunctions, J. Math. Anal. Appl.  {\bf 441} (1), (2016),  293--308, Doi: 10.1016/j.jmaa.2016.04.009

\bibitem{cdpms2016b}
Candeloro, D.,  Di Piazza, L.,   Musia{\l}, K., Sambucini,  A.R.: Relations among gauge and Pettis integrals
 for multifunctions with weakly compact convex values, Annali di Matematica
  {\bf 197} (1), (2018), 171-183. Doi: 10.1007/s10231-017-0674-z

\bibitem{cdpms2016a}
Candeloro, D., Di Piazza, L.,   Musia{\l}, K., Sambucini,  A.R.: Some new results on integration for multifunction,  Ricerche di Matematica {\bf 67} (2), (2018), 361-372. Doi: 10.1007/s11587-018-0376-x

\bibitem{monat2019}
Candeloro, D.,  Di Piazza, L.,   Musia{\l}, K., Sambucini,  A.R.: Integration of
  multifunctions with closed  convex values in arbitrary Banach spaces,  submitted (2018), arxiv 1812.00597


\bibitem{CMeS}  Candeloro,  D.,  Mesiar, R.,   Sambucini, A.R.: A special class of fuzzy measures: Choquet integral and applications,  Fuzzy Sets and Systems, {\bf 355}, (2019), 83-99,  Doi: 10.1016/j.fss.2018.04.008

\bibitem{cs2015} Candeloro, D., Sambucini,  A.R.: Comparison between some norm and order gauge integrals in Banach lattices, PanAm. Math. J.  {\bf 25} (3), (2015),  1-16

\bibitem{ncm}  Caponetti, D.,   Marraffa, V.,   Naralenkov, K.: On the integration of Riemann-measurable vector-valued functions, Monatsh. Math. {\bf 182},  (2017), 513-536, doi:10.1007/s00605-016-0923-z

\bibitem{capot}
 Capotorti, A., Coletti, G., Vantaggi, B.:
Standard and nonstandard representability of positive uncertainty orderings, 
Kybernetika, {\bf 50} (2), (2014),  189-215

\bibitem{ckr1} Cascales, B., Kadets,  V.,  Rodr\'{i}guez, J.: Measurability and selections of multifunctions
in Banach spaces, J. Convex Analysis {\bf 17}, (2010) 229-240

\bibitem{CV} Castaing, C.,   Valadier, M.: Convex Analysis and Measurable Multifunctions,
 Lecture Notes Math. 580, Springer-Verlag, Berlin-New York (1977)

\bibitem{ci1} Cicho\'{n}, M.,  Cicho\'{n}, K.,   Satco, B.: Differential inclusions and multivalued integrals,
Discuss. Math. Differ. Incl. Control Optim. {\bf 33} (2),  (2013),  171-191

\bibitem{coletti0}
Coletti, G., Petturiti, D., Vantaggi, B.: Interval-based possibilistic logic in a coherent setting, Lecture Notes in Computer Science   10351, (2017), 75-84

\bibitem{coletti}
Coletti, G., Petturiti, D., Vantaggi, B.: Models for pessimistic or optimistic decisions under different uncertain scenarios,
International Journal of Approximate Reasoning  {\bf 105}, (2019), 305–326

\bibitem{dp-ma} Di Piazza, L.,  Marraffa, V.: The McShane, PU and Henstock integrals of Banach valued functions,
Czechoslovak Math. J. {\bf 52} (3), (2002), 609--633

\bibitem{dpmm} Di Piazza, L.,  Marraffa, V.,  Musia{\l}, K.: Variational Henstock integrability of Banach space
 valued function, Math. Bohem. {\bf 141} (29),  (2016), 287--296
Doi: 10.21136/MB.2016.19

\bibitem{dms} Di Piazza L., Marraffa V., Satco B.:  Closure properties for integral problems driven by regulated functions via convergence results. J. Math. Anal. Appl. 466 (2018), no. 1, 690–710.

\bibitem{dms1} Di Piazza L., Marraffa V., Satco B.:  1.	Approximating the solutions of differential inclusions driven by measures,  Ann. Mat. Pura Appl. (2019) Doi: 10.1007/s10231-019-00857-6


\bibitem{dp} Di Piazza, L., Musia{\l}, K.: Relations among Henstock, McShane and Pettis integrals for multifunctions with compact convex values,  Monatsh. Math. {\bf 173}, (2014), 459--470

\bibitem{DP} Di Piazza,	L., Preiss, D.: When do McShane and Pettis integrals coincide?,   Illinois J. Math. 47 (4), (2003), pp. 1177--1187, ISSN: 0019-2082. 

\bibitem{FM} Fremlin,  D. H., Mendoza, J.:   On the integration of vector-valued functions, Illinois J. Math.
 {\bf 38}, (1994), 127--147

\bibitem{f1994}  Fremlin,  D. H.:  The Henstock and McShane integrals of vector-valued functions,
Illinois J. Math. {\bf 38} (3), (1994), 471--479

\bibitem{f1995}  Fremlin,  D. H.   The generalized McShane integral,  Illinois J. Math. {\bf 39} (1), (1995), 39--67.

\bibitem{gav2014} Gavrilut, A.: Remarks on monotone interval-valued set multifunctions, Information Sciences, {\bf 259}, (2014), 225-230

\bibitem{gm} G\'amez, J.L.,   Mendoza, J.:  On Denjoy-Dunford and Denjoy-Pettis integrals,  Studia Math. {\bf 130}, (1998), 115--133

\bibitem{Gor}   Gordon, R.A.: The Integrals of Lebesgue, Denjoy, Perron and Henstock, Grad. Stud. Math. 4,
 AMS, Providence (1994)

\bibitem{kal}
Kaliaj, S. B.:
The new extensions of the Henstock-Kurzweil and the McShane integrals of vector-valued functions, 
Mediterr. J. Math. {\bf 15} (1), (2018),  Art. 22, 16 pp. 

\bibitem{labu}  Labuschagne, C.C.A.,  Pinchuck, A.L.,  van Alten, C.J.: \emph{A vector lattice version of R{\aa}dstr\"{o}m's embedding theorem}, Quaest. Math. \textbf{30} (3), (2007), 285-308.

\bibitem{latorre}  La Torre, D.,Mendivil, F.: Minkowski-additive multimeasures, monotonicity and self-similarity,
Image Analysis and Stereology, {\bf 30} (3), (2011), 135-142
doi:10.5566/ias.v30.p135-142

\bibitem{mu15} Musia{\l}, K.: Pettis integrability of multifunctions with values in arbitrary Banach
spaces, J. Convex Analysis {\bf 18}, (2011), 769-810

\bibitem{nara}
Naralenkov, K.M.: A Lusin type measurability property for vector-valued functions,
 J. Math. Anal. Appl.  {\bf 417} (1),  (2014), 293-307

\bibitem{pap} Pap, E.:
Multivalued functions integration: from additive to arbitrary non-negative set function,  On logical, algebraic, and probabilistic aspects of fuzzy set theory,
Stud. Fuzziness Soft Comput., {\bf 336}, (2016), 257-274, Springer.

\bibitem{pap-iran} Pap, E. Iosif, A., Gavrilut, A.:
Integrability of an interval-valued multifunction with respect to an interval-valued set multifunction, Iranian J. of Fuzzy Systems {\bf 15} (3), (2018), 47-63.

\bibitem{park}
Park, C. K.: Set-valued Choquet Pettis integrals, Korean J. Math., {\bf 20} (4), (2012), 381-393.

\bibitem{pva} Petturiti, D., Vantaggi, B.:
Upper and lower conditional probabilities induced by a multivalued mapping, J. of Math. Anal. and Appl., {\bf 458} (2), (2018), 1214-1235

\bibitem{Ro}  Rockafellar, R.: Convex Analysis, Princeton, New Jersey. Princeton Univ. Press (1970)

\bibitem{r2008} Rodr\'{i}guez, J.: On the equivalence of McShane and Pettis integrability in non-separable Banach spaces, J. Math. Anal. Appl. {\bf 341}, (2008), 80-90


\bibitem{r2009} Rodr\'{i}guez, J.:
Some examples in vector integration, Bull. Austr. Math. Soc., {\bf 80}, (2009), 384-392.

\bibitem{ar}  Sambucini, A. R.: The Choquet integral with respect to fuzzy measures and applications, Math. Slov. {\bf 67} (6), (2017),  1427-1450, Doi: 10.1515/ms-2017-0049.


\bibitem{sun} Sun, L., Dong, H., Liu, A. X.: Aggregation Functions Considering Criteria Interrelationships in Fuzzy Multi-Criteria Decision Making: State-of-the-Art, IEEE Access, 
{\bf  6}, (2018),  68104-68136,
Doi:  10.1109/ACCESS.2018.2879741
\end{thebibliography}
\end{document}